\theoremstyle{plain}
\newtheorem{theorem}{Theorem}[section]
\newtheorem{lemma}[theorem]{Lemma}
\theoremstyle{definition}
\theoremstyle{remark}
\begin{document}

\articletype{Research Article}

\title{Memory and Anticipation: Two main theorems for Markov regime-switching stochastic processes}

\author{
\name{E. Savku \textsuperscript{a}\textsuperscript{b}\thanks{CONTACT E. Savku. Email: esavku@gmail.com}}
\affil{\textsuperscript{a}University of Oslo, Department of Mathematics, Postboks 1053, Blindern, 0316, Oslo, Norway; \textsuperscript{b} Middle East Technical University, Institute of Applied Mathematics, Ankara, 06800, Turkey.}
}

\maketitle

\begin{abstract}
We present two main theorems for stochastic processes with a Markov regime-switching model. First, we work on an existence-uniqueness theorem for a Stochastic Differential Delay Equation with Jumps and Regimes (SDDEJRs). Then, we provide the duality between an SDDEJR and an Anticipated Backward Stochastic Differential Equation with Jumps and Regimes (ABSDEJRs). Our goal is to provide two technical and basis theorems for the future theoretical and applied developments of time-delayed and time-advanced models. 
\end{abstract}

\begin{keywords}
Stochastic Differential Delay Equations, Anticipated Backward Stochastic Differential Equations, Regime-switches.
\end{keywords}

\section{Introduction and Preliminaries}
\label{sec:intro}
\noindent In this research article, we purpose to provide generalized perspectives for \textit{time-delayed} and \textit{time-advanced} mathematical systems. We purpose to handle these two fundamental concept within the framework of a Markov regime-switching jump-diffusion model. The dynamics of such equations can be described as a  continuous-time process with discrete components. While the discrete variables belong to a countable set, the other dynamics evolve according to
a stochastic differential equation with jumps. Hence, sometimes such models are so-called Stochastic Hybrid Systems and can be considered as an interleaving between a finite or countable family of jump-diffusion processes. A detailed theory of regime-switching models can be found in \cite{my:15}.\\

\noindent In terms of real life applications, stochastic differential delay equations are expressed as models with \textit{memory}. Whereas this property brings a high level of technical challenge to the scientific projects, as a consequence of their power to demonstrate the worked problems in a more realistic way, such systems have been paid a significant attention by researchers. A comprehensive treatment of the theory of the stochastic differential delay equations (SDDEs) for diffusion processes can be found in the monograph by Mohammed \cite{m:10}.\\

\noindent On the other hand, in 2009, Peng and Yang, \cite{py:06}, introduced a new type of Backward Stochastic Differential Equations (BSDEs) called Anticipated (i.e., time-advanced) BSDEs and showed a duality relation between this type of equations and SDDEs in a diffusion setting. In this work, authors have also proved the existence-uniqueness conditions for such equations and also, demonstrated a comparison theorem. Later, some fundamental results for ABSDEs have been extended to jump-diffusion processes, see \cite{by:65} and \cite{th:05}. First attempts for proving the existence-uniqueness theorems and comparison theorems on Markov chains can be found in \cite{yang} and \cite{ymy:55}.  \\

\noindent Regime switching models are quite capable of catching abrupt changes in real life phenomenon as a consequence of  their heterogeneous nature. Hence, such systems with and without time-delayed and time-advanced systems have been studied for several years so far, especially in Stochastic Optimal Control. We can see a large literature on these concepts from narrower to most extended models in engineering,  economics, finance, cognitive sciences and neuroscience such as \cite{kieu, deepa, pamen, osz:07, savku3, sw:64, savku, zes:02} and references therein.\\
 \\
\noindent We demonstrate two main theorems for time-delayed and time-advanced systems of equations by a Markov regime-switching jump-diffusion model. Firstly, we prove an existence-uniqueness theorem for SDDEs with Jumps and Regimes (SDDEJRs) and then, we provide the duality relation between SDDEJRs and Anticipated BSDEs with jumps and regimes (ABSDEJRs). Yang, Mao and Yuan \cite{ymy:55} introduced an existence-uniqueness theorem for a \textit{delayed} stochastic differential diffusion process with regimes, but they did not prove it. Then, Bao and Yuan \cite{by:65} stated an extended form of this result for a jump-diffusion process without regimes and they did not prove it as well. To the best of our knowledge, our result is the first attempt, which  presents the technical conditions of such a theorem for a delayed regime-switching jump-diffusion process. \\

\noindent Peng and Yang \cite{py:06} constructed the duality between an SDDE and an ABSDE for a diffusion setting. They showed that the  solution of an ABSDE can be obtained by the solution of an SDDE. Later, Tu and Hao~\cite{th:05} extended it to a jump-diffusion process. In this paper, we extend this relation for the systems of Markovian regime-switching SDDEs and ABSDEs with jumps similar to Peng and Yang \cite{py:06}. \\

\noindent By extending and proving these two theorems of these fields, we purpose to open the doors for the construction of an extended new theory within the framework of Markov regime-switches. This work is organized as follows: In Section \ref{pre}, we present the dynamics, which appear as a consequence of Markov regime-switching architecture of the work. Section \ref{unique} provides the proof of existence-uniqueness theorem for such a large model. In Section \ref{duality}, we demonstrate the duality between SDDEJRs and ABSDEJRs.  Last section is devoted to a conclusion and outlook.  \\

\noindent Let us introduce the mathematical nature behind our work.\\

\section{Preliminaries}
\label{pre}

\noindent Throughout this work, we work with a finite time $T>0$, which represents the maturity time. Let $(\Omega,\mathbb{F},\left(\mathcal{F}_{t}\right)_{t\geq 0},\mathbb{P})$ be a complete probability space, where \\
$\mathbb{F}=\left(\mathcal{F}_{t}:t\in [0,T]\right)$ and $\left(\mathcal{F}_{t}\right)_{t\geq 0}$ is a right-continuous, $\mathbb{P}$-completed filtration generated by a Brownian motion $W(\cdot)$, a Poisson random measure $N(\cdot,\cdot)$ and a Markov chain $\alpha(\cdot)$. We assume that these processes are independent of each other and adapted to $\mathbb{F}$. Let $\mathcal{B}_{0}$ be the Borel $\sigma$-field generated by an open subset of $\mathbb{R}_{0}:=\mathbb{R}\setminus \left\{0\right\}$, whose closure does not contain the point $0$. Let $(N(dt,dz):t\in [0,T],z\in\mathbb{R}_{0})$
be the Poisson random measure on $([0,T]\times \mathbb{R}_{0},\mathcal{B}([0,T])\otimes \mathcal{B}_{0})$. We define:
\begin{equation*}
\tilde{N}(dt,dz):=N(dt,dz)-\nu(dz)dt
\end{equation*} 
as the compensated Poisson random measure, where $\nu$ is the L\'evy measure of the jump measure $N(\cdot,\cdot)$ such that
\begin{equation*} 
\nu(\left\{0\right\})=0 \quad  \texttt{and} \quad \int_{\mathbb{R}}(1\wedge |z|^{2})\nu(dz)<\infty. 
\end{equation*}
\noindent Furthermore,  let $\left(\alpha(t): t\in [0,T]\right)$ be a continuous-time, finite-state and observable Markov chain. The finite-state space of the homogenous and irreducible Markov chain $\alpha(t)$, $S=\left\{e_{1},e_{2},...,e_{D}\right\}$, is called a canonical state space, where $D\in \mathbb{N}$, $e_{i}\in \mathbb{R}^{D}$ and the $j$th component of $e_{i}$ is the Kronecker delta $\delta_{ij}$ for each pair of $i,j=1,2,...,D$. The generator of the chain under $\mathbb{P}$ is defined by $\Lambda:=[\lambda_{ij}]_{i,j=1,2,...,D}$. For each $i,j=1,2,...,D$, $\lambda_{ij}$ is the constant transition intensity of the chain from each state $e_{i}$ to state $e_{j}$ at time $t$. For $i\neq j$, $\lambda_{ij}\geq 0$ and $\sum_{j=1}^{D}\lambda_{ij}=0$; hence, $\lambda_{ii}\leq 0.$ We suppose that for each $i,j=1,2,...,D$, with $i\neq j$, $\lambda_{ij}>0$ and $\lambda_{ii}<0$.\\

\noindent We include regime-switches to our model as a compensated random measure generated by the Markov chain $\alpha$ as in \cite{zes:02}.\\
\noindent Let us give more details: \cite{eam:13} obtained the following semimartingale representation for a Markov chain $\alpha$:
\begin{equation*}
\alpha(t)=\alpha(0)+\int_{0}^{t}\Lambda^{T}\alpha(u)du+M(t),  
\end{equation*}
where $\left(M(t):t\in [0,T]\right)$ is an $\mathbb{R}^{D}$-valued, $(\mathbb{F},\mathbb{P})$-martingale and $\Lambda^{T}$ represents the transpose of the matrix. \\
Let $J^{ij}(t)$ represent the number of the jumps from the state $e_{i}$ to the state $e_{j}$ up to time $t$ for each $i,j=1,2,...,D$, with $i\neq j$ and $t\in [0,T]$. \\ Then,
\begin{align*}
J^{ij}(t)&:=\sum \limits_{0<s\leq t}\left\langle \alpha(s-),e_{i}\right\rangle \left\langle \alpha(s)-\alpha(s-),e_{j}\right\rangle\\
&=\int_{0}^{t}\left\langle \alpha(s-),e_{i}\right\rangle \left\langle d\alpha(s),e_{j}\right\rangle  \\
&=\int_{0}^{t}\left\langle \alpha(s-),e_{i}\right\rangle \left\langle \Lambda^{T}\alpha(s),e_{i}\right\rangle ds+\int_{0}^{t}\left\langle \alpha(s-),e_{i}\right\rangle \left\langle dM(s),e_{j}\right\rangle 
\end{align*}
\begin{align*}
&=\lambda_{ij}\int_{0}^{t}\left\langle \alpha(s-),e_{i}\right\rangle ds+m_{ij}(t),
\end{align*}
where the processes $m_{ij}$ are $(\mathbb{F},\mathbb{P})$-martingales and called the basic martingales associated with the chain $\alpha$. For each fixed $j=1,2,...,D$, let $\Phi_{j}$ be the number of the jumps into state $e_{j}$ up to time $t$. Then,
\begin{equation*}
\Phi_{j}(t):=\sum \limits_{i=1,i\neq j}^{D}J^{ij}(t)=\sum \limits_{i=1,i\neq j}^{D} \lambda_{ij}\int_{0}^{t}\left\langle \alpha(s-),e_{i}\right\rangle ds+ \tilde{\Phi}_{j}(t).
\end{equation*}
Let us define $\tilde{\Phi}_{j}(t):=\sum \limits_{i=1,i\neq j}^{D}m_{ij}(t)$ and $\lambda_{j}(t):=\sum \limits_{i=1,i\neq j}^{D}\lambda_{ij}\int_{0}^{t}\left\langle \alpha(s-),e_{i}\right\rangle ds$; then for each $j=1,2,...,D$,
\begin{equation*}
\tilde{\Phi}_{j}(t)=\Phi_{j}(t)-\lambda_{j}(t)
\end{equation*}
is an $(\mathbb{F},\mathbb{P})$-martingale. By $\tilde{\Phi}(t)=(\tilde{\Phi}_{1}(t),\tilde{\Phi}_{2}(t),...,\tilde{\Phi}_{D}(t))^{T}$, we represent a compensated random measure on $([0,T]\times S,\mathcal{B}([0,T])\otimes \mathcal{B}_{S})$, where $\mathcal{B}_{S}$ is a $\sigma$-field of $S$.\\

\noindent Please see Appendix \ref{ApA} for the descriptions of the spaces that we utilized along this paper.  Now, we can introduce our model and present our first theorem.

\section{Existence-Uniqueness Theorem for an SDDE with Jumps and Regimes}
\label{unique}

\noindent In this section, we prove the existence-uniqueness theorem for a system of Markov regime-switching jump-diffusion process with delay. The techniques applied here are inspired from \cite{khm:66}.\\

\noindent Let us represent our model:
\begin{align}
dX(t)=&\ b(t,X(t),X(t-\delta_{1}(t)),\alpha(t))dt  \nonumber \\
  &+\sigma(t,X(t),X(t-\delta_{2}(t)),\alpha(t))dW(t) \nonumber \\
	&+\int_{\mathbb{R}_{0}}\eta(t,X(t-),X((t-\delta_{3}(t))-),\alpha(t-),z)\tilde{N}(dt,dz) \nonumber\\
	&+\gamma(t,X(t-),X((t-\delta_{4}(t))-),\alpha(t-))d\tilde{\Phi}(t), \qquad t\in [0,T], \label{eq:4.1} \\		
X(t)=&\ x_{0}(t), \qquad t\in[-\delta,0] \label{eq:4.1.1}, 
\end{align}
where $x_{0}$ is a c\`{a}dl\`{a}g function defined from $[-\delta, 0]$ into $\mathbb{R}$ with the norm 
\begin{equation*}
\left\|x_{0}(t)\right\|=\sup_{-\delta\leq t\leq 0}|x_{0}(t)|.
\end{equation*} 
We may call $x_{0}$ as pre-history or initial path. The delay components, $\delta_{i}, \ i=1,2,3,4$, are nonnegative continuous real-valued functions defined on $[0,T]$ such that: 
\begin{description}
\item \textbf{(A1)} There exists a constant $\delta>0$ such that for each $t\in [0,T]$, 
\begin{equation*}
-\delta\leq t-\delta_{i}(t) \leq t, \qquad i=1,2,3,4.
\end{equation*}
\item \textbf{(A2)} There exists a constant $L>0$ such that for each $t\in [0,T]$ and for each non-negative, integrable $g(\cdot)$,
\begin{equation*}
\int_{0}^{t}g(s-\delta_{i}(s))ds\leq L\int_{-\delta}^{t}g(s)ds, \qquad i=1,2,3,4.
\end{equation*}
\end{description}
Let us consider the uniform case with $\delta_{i}(t)=\delta(t)$ for $i=1,2,3,4$, and assume $X(t-\delta(t))=Y(t), \ t\in [0,T]$. \\

\noindent Now we give further assumptions for the system (\ref{eq:4.1})-(\ref{eq:4.1.1}) without losing generality. \\

\noindent $b:[0,T]\times \mathbb{R}\times \mathbb{R}\times S\rightarrow \mathbb{R}$, \ $\sigma:[0,T]\times \mathbb{R}\times \mathbb{R}\times S\rightarrow \mathbb{R}$, \\
$\eta:[0,T]\times \mathbb{R}\times \mathbb{R}\times S \times \mathbb{R}_{0} \rightarrow \mathbb{R}$ and $\gamma:[0,T]\times \mathbb{R}\times \mathbb{R}\times S \rightarrow \mathbb{R}$ satisfy the following conditions: 
\begin{description}
\item \textbf{(H1)} There exists a constant $C>0$ such that for all $t\in [0,T]$, \ $e_{i}\in S$, \ $x_{1},\ x_{2},\ y_{1}$,\\ 
$y_{2}\in \mathbb{R}$,
\begin{align*}
&\left|b(t,x_{1},y_{1},e_{i})-b(t,x_{2},y_{2},e_{i})\right| \vee \left|\sigma(t,x_{1},y_{1},e_{i})-\sigma(t,x_{2},y_{2},e_{i})\right| \\
&\vee \left\|\eta(t,x_{1},y_{1},e_{i},z)-\eta(t,x_{2},y_{2},e_{i},z)\right\|_{J} \vee \left\|\gamma(t,x_{1},y_{1},e_{i})-\gamma(t,x_{2},y_{2},e_{i})\right\|_{S} \\
&\leq C(\left|x_{1}-x_{2}\right|+\left|y_{1}-y_{2}\right|).
\end{align*}
\item \textbf{(H2)} $b(\cdot,0,0,e_{i})\in L_{\mathbb{F}}^{2}(0,T;\mathbb{R})$, \ $\sigma(\cdot,0,0,e_{i})\in L_{\mathbb{F}}^{2}(0,T;\mathbb{R})$, \ $\eta(\cdot,0,0,e_{i},\cdot)$ \\ $\in \mathcal{H}^{2}_{\mathbb{F}}(0,T;\mathbb{R})$ \ and \ $\gamma(\cdot,0,0,e_{i})\in \mathcal{M}^{2}_{\mathbb{F}}(0,T;\mathbb{R}^{D})$ for all $e_{i}\in S$ and $t\in [0,T]$.
\end{description} 
\begin{theorem}
\label{th1}
 \noindent Under the assumptions (A1), (A2), (H1) and (H2), there exists a unique c\`{a}dl\`{a}g adapted solution $X(\cdot)\in L_{\mathbb{F}}^{2}(0,T;\mathbb{R})$ for Equations $(\ref{eq:4.1})-(\ref{eq:4.1.1})$.
\end{theorem}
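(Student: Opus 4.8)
The plan is to use a Picard iteration scheme in the space $L^2_{\mathbb{F}}(0,T;\mathbb{R})$ of càdlàg adapted processes, combined with a Gronwall-type argument to establish both existence and uniqueness simultaneously. First I would set up the iteration: let $X^0(t) \equiv x_0(0)$ on $[0,T]$ and $X^0(t)=x_0(t)$ on $[-\delta,0]$, and inductively define $X^{n+1}$ by plugging $X^n$ into the right-hand side of \eqref{eq:4.1}, keeping $X^{n+1}(t)=x_0(t)$ on $[-\delta,0]$ for every $n$. Each step is well-defined: under (H1) and (H2) the coefficients evaluated along $X^n$ are square-integrable (linear growth follows from Lipschitz plus the value at $0$), so the stochastic integrals against $W$, $\tilde N$ and $\tilde\Phi$ make sense and $X^{n+1}\in L^2_{\mathbb{F}}(0,T;\mathbb{R})$, càdlàg by standard properties of these integrals.

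Next I would derive the key recursive estimate. Writing $\Delta^n(t)=X^{n+1}(t)-X^n(t)$, apply the elementary inequality $|a+b+c+d|^2\le 4(|a|^2+|b|^2+|c|^2+|d|^2)$ to the four integral terms, then take expectations and use the Burkholder–Davis–Gundy (or Itô isometry) inequalities for the $dW$, $\tilde N(dt,dz)$ and $d\tilde\Phi(t)$ martingale parts — here the norms $\|\cdot\|_J$ and $\|\cdot\|_S$ in (H1) are precisely the ones matching the quadratic variations of the jump and regime integrals, and the finiteness of $\nu$ and the boundedness of $\lambda_{ij}$ on the finite state space give the relevant constants. This yields, for some constant $K=K(C,T,L,\nu,\Lambda)$,
\begin{equation*}
\mathbb{E}\Big[\sup_{0\le s\le t}|\Delta^n(s)|^2\Big] \le K\int_0^t \mathbb{E}\big[|\Delta^{n-1}(s)|^2 + |\Delta^{n-1}(s-\delta(s))|^2\big]\,ds,
\end{equation*}
and at this point assumption (A2) is the crucial device: it converts $\int_0^t\mathbb{E}|\Delta^{n-1}(s-\delta(s))|^2\,ds$ into $L\int_{-\delta}^t \mathbb{E}|\Delta^{n-1}(s)|^2\,ds$, and since $\Delta^{n-1}$ vanishes on $[-\delta,0]$ (both iterates agree with $x_0$ there) this is just $L\int_0^t\mathbb{E}|\Delta^{n-1}(s)|^2\,ds$. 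Setting $\phi_n(t)=\mathbb{E}[\sup_{0\le s\le t}|\Delta^n(s)|^2]$ we obtain $\phi_n(t)\le K(1+L)\int_0^t \phi_{n-1}(s)\,ds$, hence by iteration $\phi_n(t)\le (K(1+L))^n t^n/n!\cdot \phi_0(T)$, which is summable.

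From summability of $\big(\phi_n(T)^{1/2}\big)$ it follows that $(X^n)$ is Cauchy in the complete space $L^2_{\mathbb{F}}(0,T;\mathbb{R})$ (with the sup-over-time $L^2$ norm), so it converges to a limit $X$; passing to the limit in the defining equation — using the same isometry estimates to control the stochastic integrals and (H1) for the drift — shows $X$ solves \eqref{eq:4.1}–\eqref{eq:4.1.1}, and a subsequence converges uniformly a.s. so $X$ is càdlàg adapted. For uniqueness, if $X$ and $X'$ are two solutions, the same computation with $\Delta=X-X'$ gives $\psi(t):=\mathbb{E}[\sup_{0\le s\le t}|\Delta(s)|^2]\le K(1+L)\int_0^t\psi(s)\,ds$ (the pre-history terms cancel since both equal $x_0$ on $[-\delta,0]$), and Gronwall's lemma forces $\psi\equiv 0$. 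I expect the main obstacle to be bookkeeping the constants in the martingale estimates for the regime-switching integral $d\tilde\Phi$ and correctly invoking (A2) to absorb the delayed terms — conceptually routine, but one must be careful that the norm $\|\gamma\|_S$ in (H1) is exactly the one controlling $\mathbb{E}\big[\big(\int_0^t \gamma\,d\tilde\Phi\big)^2\big]$ via the compensator $\lambda_j(t)$, and that the càdlàg regularity survives the limit.
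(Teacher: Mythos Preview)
Your proposal is correct, but the paper takes a different (and slightly slicker) route. Rather than running a Picard iteration and extracting the $t^n/n!$ decay via a Gronwall recursion, the paper sets up the same one-step map $h:x\mapsto X$ (your $X^{n}\mapsto X^{n+1}$), equips $L^{2}_{\mathbb{F}}(0,T;\mathbb{R})$ with the equivalent weighted norm $\|h\|_{\beta}^{2}=E\bigl[\int_{0}^{T}e^{-\beta s}|h(s)|^{2}\,ds\bigr]$ for $\beta=16C^{2}(1+L)+1$, and applies the extended It\^{o} formula to $e^{-\beta t}|h(x_{1})(t)-h(x_{2})(t)|^{2}$. The quadratic-variation terms produce exactly the $\|\cdot\|_{J}$ and $\|\cdot\|_{S}$ norms appearing in (H1), and (A2) is invoked at the same place and for the same purpose as in your argument---to convert $\int_{0}^{t}|\Delta(s-\delta(s))|^{2}\,ds$ into $L\int_{0}^{t}|\Delta(s)|^{2}\,ds$ once the pre-history cancels. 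This yields directly that $h$ is a $\tfrac{1}{2}$-contraction in $\|\cdot\|_{\beta}$, and the Banach fixed point theorem finishes both existence and uniqueness at once. Your BDG-and-iterate approach has the merit of producing a sup-in-time $L^{2}$ bound (so the solution is in $S^{2}_{\mathbb{F}}$, not just $L^{2}_{\mathbb{F}}$), which also makes the c\`{a}dl\`{a}g regularity of the limit more transparent; the paper's weighted-norm trick trades that for a shorter computation that avoids BDG and the summability bookkeeping altogether.
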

\begin{proof} Let us fix $\beta=16C^{2}(1+L)+1$, where $C$ is the Lipschitz constant given in condition (H1) and $L$ is as in assumption (A2). Related to fixed $\beta$\ value, for the sake of convenience, we use a norm in the Banach space $L_{\mathbb{F}}^{2}(0,T;\mathbb{R})$ as 
\begin{equation*}
\left\|h(\cdot)\right\|_{\beta}^{2}=E\biggl[\int_{0}^{T}e^{-\beta s}\left\|h(s)\right\|^{2}ds\biggr],
\end{equation*}
which is equivalent to the original norm of $L_{\mathbb{F}}^{2}(0,T;\mathbb{R})$. \\

\noindent For any given $x(\cdot)\in L_{\mathbb{F}}^{2}(0,T;\mathbb{R})$ with $x(t)=x_{0}(t), \ t\in [-\delta, 0]$, we set:
\begin{align}
X(t)=&b(t,x(t),y(t),\alpha(t))dt+\sigma(t,x(t),y(t),\alpha(t))dW(t) \nonumber \\
&+\int_{\mathbb{R}_{0}}\eta(t,x(t-),y(t-),\alpha(t-),z)\tilde{N}(dt,dz) \nonumber \\
&+\gamma(t,x(t-),y(t-),\alpha(t-))d\tilde{\Phi}(t), \qquad t\in [0,T], \label{eq:small} \\
X(t)=&x_{0}(t), \qquad t\in [-\delta, 0] \label{eq:small1}.
\end{align}
Since $x(\cdot)$ is given, according to the existence-uniqueness results for the stochastic differential equations with jumps and regimes (see Proposition 7.1 by Cr\'epey~\cite{c:56}), the aforementioned system (\ref{eq:small})-(\ref{eq:small1}) has a unique solution in $L_{\mathbb{F}}^{2}(0,T;\mathbb{R})$. \\
Let us define a mapping as follows:
\begin{align*}
h:L_{\mathbb{F}}^{2}(0,T;\mathbb{R})&\rightarrow L_{\mathbb{F}}^{2}(0,T;\mathbb{R})\\
                            x(t)&\rightarrow h(x(t))=X(t), \quad t\in[-\delta,T].
\end{align*}
Note that $y(t)=x(t-\delta(t)), \ t\in [0,T]$ and $h$ is well-defined.\\

\noindent We will use the following abbreviations:
\begin{align*}
&b_{1}(s):=b(s,x_{1}(s),y_{1}(s),\alpha(s)),  \\
&b_{2}(s):=b(s,x_{2}(s),y_{2}(s),\alpha(s)), \ \hbox{etc.} 
\end{align*}
For arbitrary $x_{1}, \ x_{2}\in L_{\mathbb{F}}^{2}(0,T;\mathbb{R})$, we apply an extension of It\^o's formula (see Theorem 4.1 in \cite{zes:02} or Appendix \ref{ApB}) to $e^{-\beta t}|h(x_{1})(t)-h(x_{2})(t)|^{2}$ and take the expectation:
\begin{align*}
&E\biggl[e^{-\beta t}(h(x_{1})(t)-h(x_{2})(t))^{2}\biggr] \\
&=-\beta E\biggl[\int_{0}^{t}e^{-\beta s}(h(x_{1})(s)-h(x_{2})(s))^{2}ds\biggr]\\
&\qquad +2E\biggl[\int_{0}^{t}e^{-\beta s}(h(x_{1})(s)-h(x_{2})(s))\biggl\{(b_{1}(s)-b_{2}(s))ds \\
&\qquad+(\sigma_{1}(s)-\sigma_{2}(s))dW(s)+\int_{\mathbb{R}_{0}}(\eta_{1}(s,z)-\eta_{2}(s,z))\tilde{N}(ds,dz) \\
&\qquad+(\gamma_{1}(s)-\gamma_{2}(s))d\tilde{\Phi}(s)\biggr\}\biggr] \\
&\qquad+E\biggl[\int_{0}^{t}e^{-\beta s}(\sigma_{1}(s)-\sigma_{2}(s))^{2}ds\biggr] \\
&\qquad+E\biggl[\int_{0}^{t}\int_{\mathbb{R}_{0}}e^{-\beta s}(\eta_{1}(s,z)-\eta_{2}(s,z))^{2}\nu(dz)ds\biggr] \\
&\qquad+E\biggl[\int_{0}^{t}e^{-\beta s}\sum_{j=1}^D(\gamma_{1}^{j}(s)-\gamma_{2}^{j}(s))^{2}\lambda_{j}(s)ds\biggr].
\end{align*}
Since $a^{2}+b^{2}\geq 2ab$ and by assumption (A2), we get:
\begin{align*}
&E\biggl[e^{-\beta t}(h(x_{1})(t)-h(x_{2})(t))^{2}\biggr] \\
&=-\beta E\biggl[\int_{0}^{t}e^{-\beta s}(h(x_{1})(s)-h(x_{2})(s))^{2}ds\biggr] \\
&\qquad +E\biggl[\int_{0}^{t}e^{-\beta s}(h(x_{1})(s)-h(x_{2})(s))^{2}ds\biggr] \\
&\qquad +E\biggl[\int_{0}^{t}e^{-\beta s}|b_{1}(s)-b_{2}(s)|^{2}ds\biggr]+E\biggl[\int_{0}^{t}e^{-\beta s}|\sigma_{1}(s)-\sigma_{2}(s)|^{2}ds\biggr]  \\
&\qquad +E\biggl[\int_{0}^{t}e^{-\beta s}\left\|\eta_{1}(s)-\eta_{2}(s)\right\|_{J}^{2}ds\biggr]+E\biggl[\int_{0}^{t}e^{-\beta s}\left\|\gamma_{1}(s)-\gamma_{2}(s)\right\|_{S}^{2}ds\biggr] 
\end{align*}
\begin{align*}
&\leq (-\beta+1)E\biggl[\int_{0}^{t}e^{-\beta s}(h(x_{1})(s)-h(x_{2})(s))^{2}ds\biggr] \\
&\qquad +4C^{2}E\biggl[\int_{0}^{t}e^{-\beta s}(|x_{1}(s)-x_{2}(s)|+|y_{1}(s)-y_{2}(s)|)^{2}ds\biggr]\\
&\leq (-\beta+1)E\biggl[\int_{0}^{t}e^{-\beta s}(h(x_{1})(s)-h(x_{2})(s))^{2}ds\biggr] \\
& \qquad +8C^{2}E\biggl[\int_{0}^{t}e^{-\beta s}(|x_{1}(s)-x_{2}(s)|^{2}+|y_{1}(s)-y_{2}(s)|^{2}ds\biggr] \\
&\leq (-\beta+1)E\biggl[\int_{0}^{t}e^{-\beta s}(X_{1}(s)-X_{2}(s))^{2}ds\biggr] \\
&+8C^{2}E\biggl[\int_{0}^{t}e^{-\beta s}(|x_{1}(s)-x_{2}(s)|^{2}ds\biggr] \\
&+8LC^{2}E\biggl[\int_{-\delta}^{t}e^{-\beta s}(|x_{1}(s)-x_{2}(s)|^{2}ds\biggr].
\end{align*}
Note that for $s\in [-\delta,0], \ x_{1}(s)=x_{2}(s)=x_{0}(s)$; then we see:
\begin{align*}
&E\biggl[e^{-\beta t}(h(x_{1})(t)-h(x_{2})(t))^{2}\biggr]+(\beta-1)E\biggl[\int_{0}^{t}e^{-\beta s}(X_{1}(s)-X_{2}(s))^{2}ds\biggr] \\
&\leq  8C^{2}(1+L)E\biggl[\int_{0}^{t}e^{-\beta s}|x_{1}(s)-x_{2}(s)|^{2}ds\biggr].
\end{align*}
Let us also note that $E\biggl[e^{-\beta t}(h(x_{1})(t)-h(x_{2})(t))^{2}\biggr]>0$. \\

Since $\beta=16C^{2}(1+L)+1$, we obtain:
\begin{align*}
E\biggl[\int_{0}^{t}e^{-\beta s}|h(x_{1})(s)-h(x_{2})(s)|^{2}ds\biggr]\leq \frac{1}{2}E\biggl[\int_{0}^{t}e^{-\beta s}|x_{1}(s)-x_{2}(s)|^{2}ds\biggr].
\end{align*}
It is shown that $h$ is a contraction mapping in Banach space $L_{\mathbb{F}}^{2}(0,T;\mathbb{R})$. Hence, by Banach Fixed Point Theorem (see Appendix \ref{ApB}), there exists a unique solution \\
$X(\cdot)\in L_{\mathbb{F}}^{2}(0,T;\mathbb{R})$ \ for Equations (\ref{eq:4.1})-(\ref{eq:4.1.1}).
\end{proof}


\section{Duality Between SDDEs and ABSDEs with Jumps and Regimes}
\label{duality}

In this section, we provide the relation between an SDDEJRs and an ABSDEJRs. In the dynamics of this new type of BSDE, we clearly see the future values of the state processes as large as the delay component of an SDDE. A general representation of ABSDE with jumps and regimes can be seen in Apendix \ref{ApC}. The technique applied in this section to prove Theorem \ref{th1} is inspired from \cite{py:06}.\\

Let us state the duality theorem:
\begin{theorem}
\label{th2}
 Suppose $\delta>0$ is a given constant and $b,\bar{b}\in L_{\mathbb{F}}^{2}(t-\delta,T+\delta;\mathbb{R})$, $l\in L_{\mathbb{F}}^{2}(t,T;\mathbb{R})$, $\sigma,\bar{\sigma}\in L_{\mathbb{F}}^{2}(t-\delta,T+\delta;\mathbb{R})$, $\eta, \bar{\eta}\in H_{\mathbb{F}}^{2}(t-\delta,T+\delta;\mathbb{R})$, \\ $\gamma, \bar{\gamma}\in M_{\mathbb{F}}^{2}(t-\delta,T+\delta;\mathbb{R}^{D})$ and $b,\bar{b},\sigma,\bar{\sigma},\eta,\bar{\eta},\gamma,\bar{\gamma}$ are uniformly bounded. Then, for all $\xi \in S_{\mathbb{F}}^{2}(T,T+\delta;\mathbb{R}), \ \psi(t)\in L_{\mathbb{F}}^{2}(T,T+\delta;\mathbb{R}), \ \zeta\in H_{\mathbb{F}}^{2}(T,T+\delta;\mathbb{R})$ and \\
$\vartheta\in M_{\mathbb{F}}^{2}(T,T+\delta;\mathbb{R}^{D})$, the solution $Y$ of the following ABSDE,
\begin{align*}
-dY(s)=&\ \biggl(b(s,\alpha(s))Y(s)+\bar{b}(s,\alpha(s))E[Y(s+\delta)|\mathcal{F}_{s}] \\
&+\sigma(s,\alpha(s))Z(s)+\bar{\sigma}(s,\alpha(s))E[Z(s+\delta)|\mathcal{F}_{s}] \\
&+\int_{\mathbb{R}_{0}}Q(s,z)\eta(s,\alpha(s-),z)\nu(dz) \\
&+\int_{\mathbb{R}_{0}}E[Q(s+\delta,z)|\mathcal{F}_{s}]\bar{\eta}(s,\alpha(s-),z)\nu(dz) \\
&+\sum_{j=1}^{D}V^{j}(s)\gamma^{j}(s,\alpha(s-))\lambda_{j}(s) \\
&+\sum_{j=1}^{D}E[V^{j}(s+\delta)|\mathcal{F}_{s}]\bar{\gamma}^{j}(s,\alpha(s-))\lambda_{j}(s)+l(s,\alpha(s))\biggr)dt \\
&-Z(s)dW(s)-\int_{\mathbb{R}_{0}}Q(s,z)\tilde{N}(ds,dz)-V(s)d\tilde{\Phi}(s), \ s\in [t,T], 
\end{align*}
with terminal values, \ $Y(s)=\xi(s), \ Z(s)=\psi(s),\ Q(s)=\zeta(s)$ and $V(s)=\vartheta(s),$ \\
$s\in [T,T+\delta]$, can be given by the subsequent closed formula:
\begin{align*}
Y(t)=&\ E\biggl[X(T)\xi(T)+\int_{t}^{T}X(s)l(s,\alpha(s))ds \\
&+\int_{T}^{T+\delta}\biggl\{ \xi(s)\bar{b}(s-\delta,\alpha(s-\delta))X(s-\delta) \\
&+\psi(s)\bar{\sigma}(s-\delta,\alpha(s-\delta))X(s-\delta) \\
&+\int_{\mathbb{R}_{0}}\zeta(s,z)\bar{\eta}(s-\delta,\alpha((s-\delta)-),z)X((s-\delta)-)\nu(dz) \\
&+\sum_{j=1}^{D}\vartheta^{j}(s)\bar{\gamma}^{j}(s-\delta,\alpha((s-\delta)-))X((s-\delta)-)\lambda_{j}(s)\biggr\}ds| \mathcal{F}_{t} \biggr] 
\end{align*}
a.e., a.s., where $X(s)$ is the solution of the following SDDEJR with initial history:
\begin{align}
dX(s)=&\ \biggl(b(s,\alpha(s))X(s)+\bar{b}(s-\delta,\alpha(s-\delta))X(s-\delta)\biggr)ds \nonumber \\
&+\biggl(X(s)\sigma(s,\alpha(s))+X(s-\delta)\bar{\sigma}(s-\delta, \alpha(s-\delta))\biggr)dW(s) \nonumber \\
&+\int_{\mathbb{R}_{0}}\biggl(X(s-)\eta(s,\alpha(s-),z) \nonumber  \\
&+X((s-\delta)-)\bar{\eta}(s-\delta,\alpha((s-\delta)-),z)\biggr)\tilde{N}(ds,dz) \nonumber 
\end{align}
\begin{align}
&+\biggl(X(s-)\gamma(s,\alpha(s-))+X((s-\delta)-)\bar{\gamma}(s-\delta,\alpha(s-\delta)-)\biggr)d\tilde{\Phi}(s), \nonumber \\
\hbox{for} \ s\in &[t,T+\delta], \label{eq:sdded}\\
X(t)=&\ 1, \label{d1} \\
X(s)=&\ 0, \qquad s\in [t-\delta,t)  \label{d2}.
\end{align}
\end{theorem}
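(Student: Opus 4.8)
The plan is to follow the Peng--Yang duality argument: apply the It\^o product rule to $X(s)Y(s)$, where $X$ solves the forward SDDEJR $(\ref{eq:sdded})$--$(\ref{d2})$ and $(Y,Z,Q,V)$ solves the given ABSDEJR, then take the conditional expectation $E[\,\cdot\,|\mathcal{F}_{t}]$ and identify the surviving terms. First I would record that $(\ref{eq:sdded})$--$(\ref{d2})$ has a unique solution $X(\cdot)\in L_{\mathbb{F}}^{2}(t-\delta,T+\delta;\mathbb{R})$: this is the affine special case of Theorem \ref{th1}, since with $b,\bar{b},\sigma,\bar{\sigma},\eta,\bar{\eta},\gamma,\bar{\gamma}$ uniformly bounded the structural coefficients $x\mapsto b(s)x$, $x\mapsto \sigma(s)x$, etc.\ are globally Lipschitz in the state variables, and assumptions (A1)--(A2) hold with $\delta_{i}(s)\equiv\delta$; the ABSDEJR is assumed to admit a solution in the spaces of Appendix \ref{ApC}. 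Because $X\equiv 0$ on $[t-\delta,t)$ and $X(t)=1$, the delay contributions in $(\ref{eq:sdded})$ are well controlled near the left endpoint.

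Next I would apply the extension of It\^o's formula for regime-switching jump-diffusions (Theorem 4.1 in \cite{zes:02}, see Appendix \ref{ApB}) to $X(s)Y(s)$ on $[t,T]$. This yields: a drift from $dX$ paired with $Y(s)$; a drift from $-dY$ paired with $X(s)$, i.e.\ $-X(s)$ times the full driver of the ABSDEJR; the three quadratic-covariation terms coming from the Brownian part ($(X\sigma+X(\cdot-\delta)\bar{\sigma}(\cdot-\delta))Z$), the Poisson part ($\int_{\mathbb{R}_{0}}(X\eta+X(\cdot-\delta)\bar{\eta}(\cdot-\delta))Q\,\nu(dz)$), and the Markov-chain part ($\sum_{j}(X\gamma^{j}+X(\cdot-\delta)\bar{\gamma}^{j}(\cdot-\delta))V^{j}\lambda_{j}$); and three stochastic integrals against $W$, $\tilde{N}$ and $\tilde{\Phi}$. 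Taking $E[\,\cdot\,|\mathcal{F}_{t}]$ and using the uniform boundedness of the coefficients together with the $L^{2}$-bounds on $X$ and on $(Y,Z,Q,V)$, these stochastic integrals are true martingales (if one wants full rigour, localize by stopping times $\tau_{n}\uparrow T$ and pass to the limit by dominated convergence), hence their conditional expectation vanishes.

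Then I would collect terms. The non-anticipating contributions cancel in pairs: the zero-order term gives $-b(s)X(s)Y(s)+b(s)X(s)Y(s)=0$, and for each noise channel the covariation term cancels the matching piece of $-X(s)(\text{driver})$ and leaves precisely the delayed coefficient times the corresponding ABSDEJR process, e.g.\ $X(s-\delta)\bar{\sigma}(s-\delta)Z(s)$, $\int_{\mathbb{R}_{0}}X((s-\delta)-)\bar{\eta}(s-\delta,z)Q(s,z)\nu(dz)$, and $\sum_{j}X((s-\delta)-)\bar{\gamma}^{j}(s-\delta)V^{j}(s)\lambda_{j}(s)$. What is left on the right-hand side is $-X(s)l(s,\alpha(s))$, the four anticipating terms $-X(s)\bar{b}(s)E[Y(s+\delta)|\mathcal{F}_{s}]$ and its $\bar{\sigma},\bar{\eta},\bar{\gamma}$ analogues, and the four delayed terms $Y(s)\bar{b}(s-\delta)X(s-\delta)$ and its analogues. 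For each channel I would then use the tower property, e.g.\ $E\big[X(s)\bar{b}(s)E[Y(s+\delta)|\mathcal{F}_{s}]\,\big|\,\mathcal{F}_{t}\big]=E\big[X(s)\bar{b}(s)Y(s+\delta)\,\big|\,\mathcal{F}_{t}\big]$ since $X(s)\bar{b}(s)$ is $\mathcal{F}_{s}$-measurable, perform the substitution $u=s+\delta$ to rewrite $\int_{t}^{T}X(s)\bar{b}(s)Y(s+\delta)\,ds=\int_{t+\delta}^{T+\delta}X(u-\delta)\bar{b}(u-\delta)Y(u)\,du$, and observe that $\int_{t}^{T}Y(s)\bar{b}(s-\delta)X(s-\delta)\,ds=\int_{t+\delta}^{T}Y(s)\bar{b}(s-\delta)X(s-\delta)\,ds$ because $X\equiv 0$ on $[t-\delta,t)$. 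The two integrals over $[t+\delta,T]$ cancel, leaving only the integral over $[T,T+\delta]$, where $(Y,Z,Q,V)=(\xi,\psi,\zeta,\vartheta)$. Doing this for all four channels, together with $X(t)=1$ and $X(T)Y(T)=X(T)\xi(T)$, and moving $X(t)Y(t)$ to the left, produces exactly the asserted closed formula for $Y(t)$, a.e., a.s.

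The step I expect to be the main obstacle is the careful bookkeeping of the change of variables and Fubini in the presence of the three distinct driving terms (Brownian motion, the compensated Poisson random measure $\tilde{N}$, and the chain martingale $\tilde{\Phi}$) combined with the nested conditional expectations, together with a clean justification that every stochastic integral produced by It\^o's formula is a genuine (not merely local) martingale; this is where the uniform boundedness of $b,\bar{b},\sigma,\bar{\sigma},\eta,\bar{\eta},\gamma,\bar{\gamma}$, the $L^{2}$-integrability of the forward and backward solutions, and a localization-and-limit argument are all needed.
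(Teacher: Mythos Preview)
Your proposal is correct and follows essentially the same route as the paper: establish existence/uniqueness of the forward SDDEJR $(\ref{eq:sdded})$--$(\ref{d2})$, apply the product rule for regime-switching jump-diffusions to $X(s)Y(s)$ on $[t,T]$, take $E[\,\cdot\,|\mathcal{F}_{t}]$, use the tower property together with the shift $u=s+\delta$, and exploit $X\equiv 0$ on $[t-\delta,t)$ so that only the $[T,T+\delta]$ contributions survive. The only cosmetic differences are that the paper invokes the product rule (Lemma~3.2 of \cite{zes:02}) directly rather than the general It\^o formula, and that it establishes well-posedness of $X$ by first solving the delay-free SDE on $[t,t+\delta]$ (where the delayed terms vanish) and then applying Theorem~\ref{th1} on $[t+\delta,T+\delta]$, whereas you appeal to Theorem~\ref{th1} in one shot; your added remarks on localization for the martingale property are more careful than what the paper spells out.
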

\begin{proof}
First, let us show that the system (\ref{eq:sdded})-(\ref{d1})-(\ref{d2}) has a unique solution.\\

When $s\in [t,t+\delta]$, the system becomes:
\begin{align}
\label{eq:nf}
dX(s)=& \ X(s-)\biggl\{b(s,\alpha(s))ds+\sigma(s,\alpha(s))dW(s)+\int_{\mathbb{R}_{0}}\eta(s,\alpha(s-),z)\tilde{N}(ds,dz) \nonumber \\
&+\gamma(s,\alpha(s-))\biggr\}d\tilde{\Phi}(s), \qquad s\in [t,t+\delta],  \\
X(t)=&\ 1. \nonumber
\end{align}
This is an SDE with jumps and regimes without delay and it is known that Equation (\ref{eq:nf}) has a unique solution in $L_{\mathbb{F}}^{2}(0,T;\mathbb{R})$ (cf. \cite{c:56}). \\
Let $\kappa(\cdot)$ be the solution of Equations (\ref{eq:nf}). For $s\in [t+\delta,T+\delta]$, Equation (\ref{eq:sdded}) becomes:
\begin{align}
\label{eq:sddejr}
dX(s)=&\ \biggl(b(s,\alpha(s))X(s)+\bar{b}(s-\delta,\alpha(s-\delta))X(s-\delta)\biggr)ds \nonumber \\
&+\biggl(X(s)\sigma(s,\alpha(s))+X(s-\delta)\bar{\sigma}(s-\delta, \alpha(s-\delta))\biggr)dW(s) \nonumber \\
&+\int_{\mathbb{R}_{0}}\biggl(X(s-)\eta(s,\alpha(s-),z) \nonumber\\
&+X((s-\delta)-)\bar{\eta}(s-\delta,\alpha((s-\delta)-),z)\biggr)\tilde{N}(dt,dz) \nonumber \\
&+\biggl(X(s-)\gamma(s,\alpha(s-))+X((s-\delta)-)\bar{\gamma}(s-\delta,\alpha((s-\delta)-))\biggr)d\tilde{\Phi}(t), \\
s\in & [t+\delta,T+\delta], \nonumber \\
X(s)=&\ \kappa(s), \qquad s\in [t,t+\delta] \nonumber .
\end{align}
This is a classical SDDE with jumps and regimes; hence, by Theorem (\ref{th1}), it is known that Equation (\ref{eq:sddejr}) has a unique solution. \\

If we apply the product rule to $X(s)Y(s)$ for $s\in [t,T]$ (cf. Lemma 3.2, by Zhang, Elliott and Siu \cite{zes:02} or Appendix \ref{ApB}), we obtain:
\begin{align*}
&X(T)Y(T)-X(t)Y(t) \\
&=-\int_{t}^{T} X(s-)\biggl\{ \biggl(b(s,\alpha(s))Y(s)+\bar{b}(s,\alpha(s))E[Y(s+\delta)|\mathcal{F}_{s}]\\
&\quad +\sigma(s,\alpha(s))Z(s)+\bar{\sigma}(s,\alpha(s))E[Z(s+\delta)|\mathcal{F}_{s}]
\end{align*}
\begin{align*}
&\quad +\int_{\mathbb{R}_{0}}(\eta(s,\alpha(s-),z)Q(s,z)+\bar{\eta}(s,\alpha(s-),z)E[Q(s+\delta,z)|\mathcal{F}_{s}])\nu(dz) \\
&\quad +\sum_{j=1}^{D}(\gamma^{j}(s,\alpha(s-))V^{j}(s)+\bar{\gamma}^{j}(s,\alpha(s-))E[V^{j}(s+\delta)|\mathcal{F}_{s}]))\lambda_{j}(t)\\
&\quad +l(s,\alpha(s))\biggr)ds-Z(s)dW(s)-\int_{\mathbb{R}_{0}}Q(s,z)\tilde{N}(ds,dz)-V(s)d\tilde{\Phi}(s)\biggr\}\\
&\quad +\int_{t}^{T}Y(s)\biggl\{ \biggl(b(s,\alpha(s))X(s)+\bar{b}(s-\delta,\alpha(s-\delta))X(s-\delta)\biggr)ds \\
&\quad +\biggl(\sigma(s,\alpha(s))X(s)+\bar{\sigma}(s-\delta,\alpha(s-\delta))X(s-\delta)\biggr)dW(s)\\
&\quad +\int_{\mathbb{R}_{0}}\biggl(\eta(s,\alpha(s-),z)X(s-)+\bar{\eta}(s-\delta,\alpha((s-\delta)-),z)X((s-\delta)-)\biggr)\tilde{N}(ds,dz) \\
&\quad +\biggl(\gamma(s,\alpha(s-))X(s-)+\bar{\gamma}(s-\delta,\alpha((s-\delta)-)X((s-\delta)-)\biggr)d\Phi(s)\biggr\} \\
&\quad +\int_{t}^{T}\biggl\{\biggl(\sigma(s,\alpha(s))X(s)+\bar{\sigma}(s-\delta)X(s-\delta)\biggr)Z(s)+\int_{\mathbb{R}_{0}}\biggl(\eta(s,\alpha(s-),z) \\
&\quad \times X(s-)+\bar{\eta}(s-\delta,\alpha((s-\delta)-),z)X((s-\delta)-)\biggr)Q(s,z)\nu(dz) \\
&\quad +\sum_{j=1}^{D}\biggl(\gamma^{j}(s,\alpha(s-))X(s-)+\bar{\gamma}^{j}(s-\delta,\alpha((s-\delta)-))\\
&\quad \times X((s-\delta)-)\biggr)V^{j}(s)\lambda_{j}(s)\biggr\}ds.
\end{align*}
Let us arrange the terms and take conditional expectation with respect to $\mathcal{F}_{t}$. Then, we get:
\begin{align*}
&E\left[X(T)Y(T)-X(t)Y(t)|\mathcal{F}_{t}\right] \\
&=E\biggl[\int_{t}^{T}\biggl\{ \bar{b}(s-\delta,\alpha(s-\delta))Y(s)X(s)-\bar{b}(s,\alpha(s))E[Y(s+\delta)|\mathcal{F}_{s}]X(s)\\
&\quad +\bar{\sigma}(s-\delta,\alpha(s-\delta))Z(s)X(s-\delta)-\bar{\sigma}(s,\alpha(s))E[Z(s+\delta)|\mathcal{F}_{s}]X(s) \\
&\quad -l(s,\alpha(s))X(s)-\int_{\mathbb{R}_{0}}\bar{\eta}(s,\alpha(s-),z)E[Q(s+\delta,z)|\mathcal{F}_{s}]X(s-)\nu(dz) \\
& \quad-\sum_{j=1}^{D}\bar{\gamma}^{j}(s,\alpha(s-))E[V^{j}(s+\delta)|\mathcal{F}_{s}]X(s-)\lambda_{j}(s)\\
& \quad +\int_{\mathbb{R}_{0}}\bar{\eta}(s-\delta,\alpha((s-\delta)-),z))Q(s,z)X((s-\delta)-)\nu(dz) \\
& \quad +\sum_{j=1}^{D}\bar{\gamma}^{j}(s-\delta,\alpha((s-\delta)-))V^{j}(s)X((s-\delta)-)\biggr\}ds|\mathcal{F}_{t} \biggr].
\end{align*}
We recall that $X(t)=1$ and $X(s)=0$ for $s\in [t-\delta,t)$. By tower property,  we obtain:
\begin{align*}
Y(t)=&\ E\biggl[X(T)Y(T)+\int_{t}^{T}X(s)l(s,\alpha(s))ds|\mathcal{F}_{t}\biggr] \\
&-E\biggl[\int_{t}^{T}X(s-\delta)Y(s)\bar{b}(s-\delta,\alpha(s-\delta))ds \\
&\qquad -\int_{t+\delta}^{T+\delta}X(s-\delta)Y(s)\bar{b}(s-\delta,\alpha(s-\delta))ds|\mathcal{F}_{t}\biggr] \\
&-E\biggl[\int_{t}^{T}X(s-\delta)Z(s)\bar{\sigma}(s-\delta,\alpha(s-\delta))ds \\
&\qquad -\int_{t+\delta}^{T+\delta}X(s-\delta)Z(s)\bar{\sigma}(s-\delta,\alpha(s-\delta))ds|\mathcal{F}_{t}\biggr] \\
&-E\biggl[\int_{t}^{T}\int_{\mathbb{R}_{0}}X((s-\delta)-)Q(s,z)\bar{\eta}(s-\delta,\alpha((s-\delta)-),z)\nu(dz)ds \\
&\qquad -\int_{t+\delta}^{T+\delta}\int_{\mathbb{R}_{0}}X((s-\delta)-)Q(s,z)\bar{\eta}(s-\delta,\alpha((s-\delta)-),z)\nu(dz)ds|\mathcal{F}_{t}\biggr] \\
&-E\biggl[\int_{t}^{T}\sum_{j=1}^{D}X((s-\delta)-)V^{j}(s)\bar{\gamma}^{j}(s-\delta,\alpha((s-\delta)-))\lambda_{j}(s)ds \\
&\qquad -\int_{t+\delta}^{T+\delta}\sum_{j=1}^{D}X((s-\delta)-)V^{j}(s)\bar{\gamma}(s-\delta,\alpha((s-\delta)-))\lambda_{j}(s)ds|\mathcal{F}_{t}\biggr]. \\
\end{align*}
Hence, we get a closed-form representation for $Y(t)$ as follows:
\begin{align*}
Y(t)=&\ E\biggl[X(T)\xi(T)+\int_{t}^{T}X(s)l(s,\alpha(s))ds \\
&+\int_{T}^{T+\delta}\biggl\{ \xi(s)\bar{b}(s-\delta,\alpha(s-\delta))X(s-\delta) \\
&+\psi(s)\bar{\sigma}(s-\delta,\alpha(s-\delta))X(s-\delta) \\
&+\int_{\mathbb{R}_{0}}\zeta(s,z)\bar{\eta}(s-\delta,\alpha((s-\delta)-),z)X((s-\delta)-)\nu(dz) \\
&+\sum_{j=1}^{D}\vartheta^{j}(s)\bar{\gamma}^{j}(s-\delta,\alpha((s-\delta)-))X((s-\delta)-)\lambda_{j}(s)\biggr\}ds|\mathcal{F}_{t}\biggr].
\end{align*}
\end{proof}
 
For an existence-uniqueness theorem of ABSDEs with jumps and regimes and an application of Theorem \ref{th1} and Theorem \ref{th2} to a stochastic optimal control problem, please see Savku and Weber \cite{sw:64}.


\section{Conclusion and Outlook}

This research paper aims to provide complementary results to provide the main basis for the development of ABSDEs with \textit{regimes}. These theorems may help for the related extensions in the theory of BSDEs such as in  reflected and doubly reflected BSDEs with \textit{regimes}. These advanced technical structures may lead many type of applications in stochastic optimal control, stochastic game theory, finance and insurance. For example, to the best of our knowledge, a comparison theorem for ABSDEJRs has not been proved yet, which becomes our next goal with its possible path into the game theory. Even some recent works \cite{fouque, han2} shows that time-delayed and time-advanced models evolves towards Deep Reinforcement Learning.  We believe that our technical results will serve as complementary theorems within the framework of memory and anticipation in a diversified field of study.

\section*{Disclosure statement and Funding}
There is no conflict of interest and this research did not have any receive any specific grant from funding agencies in the public, commercial, or not-for-profit sectors.

\bibliographystyle{tfs}
\bibliography{savkureferences}

\section{Appendices}

\appendix
\section{Complementary Remarks for Banach Spaces}
\label{ApA}

Firstly, let us introduce the following Banach spaces: \\

$L^{2}(\mathcal{F}_{T};\mathbb{R})=\{ \mathbb{R}$-valued, \ $\mathcal{F}_{T}$-measurable random variable $\phi$ such that
$E[\left|\phi \right|^{2}]<\infty\}$, \\ 

$L^{2}(\mathcal{B}_{0};\mathbb{R})=\{ \mathbb{R}$-valued, \ $\mathcal{B}_{0}$-measurable random variable $\phi$ such that \\
$\left\|\phi \right\|^{2}_{J}=\int_{\mathbb{R}_{0}}\left|\phi(z)\right|^{2}\nu(dz)<\infty \}$,\\

$L^{2}(\mathcal{B}_{S};\mathbb{R}^{D})=\{ \mathbb{R}^{D}$-valued, \ $\mathcal{B}_{S}$-measurable random variable $\phi$ such that \\
$\left\|\phi \right\|^{2}_{S}=\sum_{j=1}^{D}\left|\phi^{j} \right|^{2}\lambda_{j}(t)<\infty, \ j=1,2,...,D \}$,\\

$L^{2}(\mathcal{F}_{T}\times \mathcal{B}_{0};\mathbb{R})=\{ \mathbb{R}$-valued, $\mathcal{F}_{T}\times \mathcal{B}_{0}$-measurable random variable $\phi$ such that \\
$E[\int_{\mathbb{R}_{0}}\left|\phi(z) \right|^{2}\nu(dz)]<\infty \}$,\\

$L^{2}(\mathcal{F}_{T}\times \mathcal{B}_{S};\mathbb{R}^{D})=\{ \mathbb{R}^{D}$-valued,  $\mathcal{F}_{T}\times \mathcal{B}_{S}$-measurable random variable $\phi$ such that$ \\ 
E[\sum_{j=1}^{D}\left|\phi^{j} \right|^{2}\lambda_{j}(t)]<\infty, \ j=1,2,...,D \}$,\\

$L_{\mathbb{F}}^{2}(0,T;\mathbb{R})=\{ \mathbb{R}$-valued, \ $\mathcal{F}_{t}$-adapted stochastic process $\phi$ such that \\
$E[\int_{0}^{T}\left|\phi(t) \right|^{2}dt]<\infty \}$, \\ 

$S_{\mathbb{F}}^{2}(0,T;\mathbb{R})$=\{c\`{a}dl\`{a}g process $\phi$ in $L_{\mathbb{F}}^{2}(0,T;\mathbb{R})$ \ such that \ $E[\sup_{t\in [0,T]}\left|\phi(t) \right|^{2}]<\infty \}$, \\

$\mathcal{H}^{2}_{\mathbb{F}}(0,T;\mathbb{R})=\{ \mathbb{R}$-valued, \ $\mathcal{P}\otimes\mathcal{B}_{0}$-measurable stochastic process $\phi$ such that \\ 
$\left\|\phi(t) \right\|^{2}_{\mathcal{H}^{2}} =E[\int_{0}^{T}\left\|\phi(t) \right\|^{2}_{J}dt]<\infty \}$, \\

$\mathcal{M}^{2}_{\mathbb{F}}(0,T;\mathbb{R}^{D})=\{ \mathbb{R}^{D}$-valued, \ $\mathcal{P}\otimes \mathcal{B}_{S}-$measurable stochastic process $\phi$ such that \\
$\left\| \phi(t) \right\|^{2}_{\mathcal{M}^{2}}=E[\int_{0}^{T}\left\|\phi(t) \right\|^{2}_{S}dt]<\infty \}$.

\section{Complementary Remarks for Existence-Uniqueness Theorem}
\label{ApB}

Let us present a Markov regime-switching jump-diffusion model as follows:
\begin{align}
\label{eq:appd}
Y(t)=&\ b(t,Y(t),\alpha(t))dt+\sigma(t,Y(t),\alpha(t))dW(t) \nonumber \\
&+\int_{\mathbb{R}_{0}}\eta(t,Y(t-),\alpha(t-),z)\tilde{N}(dt,dz) \nonumber \\
&+\gamma(t,Y(t-),\alpha(t-))d\tilde{\Phi}(t), \qquad t\in [0,T],  \\
Y(0)=&\ y_{0}\in \mathbb{R}^{N}, \nonumber
\end{align}
where
\begin{align*}
&b:[0,T]\times \mathbb{R}^{N}\times S\rightarrow \mathbb{R}^{N}, \\
&\sigma:[0,T]\times \mathbb{R}^{N}\times S\rightarrow \mathbb{R}^{N\times M},\\
&\eta:[0,T]\times \mathbb{R}^{N}\times S\times \mathbb{R}_{0}\rightarrow \mathbb{R}^{N\times L}, \\
&\gamma:[0,T]\times \mathbb{R}^{N}\times S\times \rightarrow \mathbb{R}^{N\times D}
\end{align*}
are given functions. By Proposition 7.1 of Cr\'epey~\cite{c:56}, the system (\ref{eq:appd}) has a unique solution $Y(t)\in L_{\mathbb{F}}^{2}(0,T;\mathbb{R}^{N})$ under the following conditions:
\begin{description}
\item \textbf{(K1)} There exists a constant $K>0$ such that for all $t\in [0,T]$, \ $e_{i}\in S$, \ $x_{1},\ x_{2}\in \mathbb{R}^{N}$,
\begin{align*}
&\left\|b(t,x_{1},e_{i})-b(t,x_{2},e_{i})\right\|+\left\|\sigma(t,x_{1},e_{i})-\sigma(t,x_{2},e_{i})\right\| \\
&+\left\|\eta(t,x_{1},e_{i},z)-\eta(t,x_{2},e_{i},z)\right\|_{J}+\left\|\gamma(t,x_{1},e_{i})-\gamma(t,x_{2},e_{i})\right\|_{S} \\
&\leq K\left\|x_{1}-x_{2}\right\|.
\end{align*}
\item \textbf{(K2)} $b(\cdot,0,e_{i})\in L_{\mathbb{F}}^{2}(0,T;\mathbb{R}^{N})$, \ $\sigma(\cdot,0,e_{i})\in L_{\mathbb{F}}^{2}(0,T;\mathbb{R}^{N\times M})$, \ $\eta(\cdot,0,e_{i},\cdot)\in \mathcal{H}^{2}_{\mathbb{F}}(0,$\\$T;\mathbb{R}^{N\times L})$ \ and \ $\gamma(\cdot,0,e_{i})\in \mathcal{M}^{2}_{\mathbb{F}}(0,T;\mathbb{R}^{N\times D})$ for all $e_{i}\in S$ and $t\in [0,T]$.
\end{description}
Let us give the extension of It\^o's formula as in Zhang, Elliott and Siu~\cite{zes:02}.  
\begin{theorem}
\label{thm:ito}
Suppose an $N$-dimensional process $Y(t), \ t\in [0,T]$, is given as in System (\ref{eq:appd}) and the function $\phi(\cdot,\cdot,e_{i})\in C^{1,2}([0,T]\times \mathbb{R}^{N})$ \ for each \ $e_{j}\in S$. Then,
\begin{align*}
&\phi(T,Y(T),\alpha(T))-\phi(0,Y(0),\alpha(0)) \\
&=\int_{0}^{T}\biggl\{\biggl(\frac{\partial{\phi}}{\partial{t}}(t,Y(t-),\alpha(t-))+\sum_{k=1}^{N}\frac{\partial{\phi}}{\partial{y_{k}}}(t,Y(t-),\alpha(t-))b_{k}(t,Y(t-),\alpha(t-))\biggr) \nonumber \\
&\quad+\frac{1}{2}\sum_{k=1}^{N}\sum_{n=1}^{N}\int_{0}^{T}\frac{\partial^{2}{\phi}}{\partial{y_{k}}\partial{y_{n}}}(t,Y(t-),\alpha(t-))\sum_{l=1}^{M}\sigma_{kl}\sigma_{nl}(t,Y(t-),\alpha(t-))  \nonumber \\
&\quad+\sum_{m=1}^{L}\int_{0}^{T}\int_{\mathbb{R}_{0}}\biggl(\phi(t,Y(t-)+\eta^{m}(t,Y(t-),\alpha(t-),z),\alpha(t-))-\phi(t,Y(t-),\alpha(t-)) \nonumber \\
&\quad-\sum_{n=1}^{N}\frac{\partial{\phi}}{\partial{y_{n}}}(t,Y(t-),\alpha(t-))\eta_{nm}(t,Y(t-),\alpha(t-),z)\biggr)\nu_{m}(dz) \nonumber 
\end{align*}
\begin{align*}
&\quad+\sum_{j=1}^{D}\int_{\mathbb{R}_{0}}\biggl(\phi(t,Y(t-)+\gamma^{(j)}(t,Y(t-),\alpha(t-)),e_{j})-\phi(t,Y(t-),\alpha(t-)) \nonumber \\
&\quad-\sum_{n=1}^{N}\frac{\partial{\phi}}{\partial{y_{n}}}(t,Y(t-),\alpha(t-))\gamma_{nj}(t,Y(t-),\alpha(t-))\biggr)\lambda_{j}(t)\biggr\}dt \\
&\quad+\int_{0}^{T}\sum_{k=1}^{N}\frac{\partial{\phi}}{\partial{y}_{k}}(s,Y(s-),\alpha(s-))\sum_{n=1}^{M}\sigma_{kn}(s,Y(s-),\alpha(s-))dW(t) \nonumber \\
&\quad+\int_{0}^{T}\sum_{m=1}^{L}\int_{\mathbb{R}_{0}}\biggl(\phi(s,Y(s-)+\eta^{(m)}(s,Y(s-),\alpha(s-),z),\alpha(s-)) \nonumber \\
&\quad -\phi(s,Y(s-),\alpha(s-))\biggr)\tilde{N}(ds,dz) \nonumber \\
&\quad+\int_{0}^{T}\sum_{j=1}^{D}\biggl(\phi(s,Y(s-)+\gamma^{(j)}(s,Y(s-),\alpha(s-)),e_{j}) \nonumber \\
&\quad-\phi(s,Y(s-),\alpha(s-))\biggr)d\tilde{\Phi}_{j}(s),
\end{align*} 
where $\eta^{(m)}$ and $\gamma^{(j)}$ represents the $m$th and $j$th columns of the matrices $\eta$ and $\gamma$, respectively. \\
\end{theorem}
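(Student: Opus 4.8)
The plan is to view $(Y,\alpha)$ as a semimartingale whose discontinuities arise from two mutually exclusive sources — the Poisson random measure $N$ and the Markov chain $\alpha$ — and then to apply the general It\^o formula for semimartingales with jumps, matching each resulting term against the stated expression. Since $W$, $N$ and $\alpha$ are assumed independent and adapted to $\mathbb{F}$, almost surely no two of these drivers jump simultaneously, so the jump part of $\phi(\cdot,Y,\alpha)$ decomposes into a Poisson contribution and a regime contribution that can be treated separately.

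First I would write the canonical decomposition of $Y$. Using $\tilde{N}(dt,dz)=N(dt,dz)-\nu(dz)dt$ and $\tilde{\Phi}_{j}=\Phi_{j}-\lambda_{j}$, the continuous local-martingale part of $Y$ is $\int\sigma\,dW$, its continuous finite-variation part is $b\,dt-\int_{\mathbb{R}_{0}}\eta\,\nu(dz)\,dt-\sum_{j}\gamma^{(j)}\,d\lambda_{j}$, and its purely discontinuous part collects the jumps $\Delta Y=\eta^{(m)}(\cdot,z)$ at the Poisson times and $\Delta Y=\gamma^{(j)}$ at each transition of $\alpha$ into $e_{j}$. The quadratic variation of the continuous martingale part is $d[Y^{c}]=\sigma\sigma^{T}\,dt$, which furnishes the second-order term $\tfrac{1}{2}\sum_{k,n}\frac{\partial^{2}\phi}{\partial y_{k}\partial y_{n}}\sum_{l}\sigma_{kl}\sigma_{nl}\,dt$.

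Next I would apply the It\^o formula term by term. The continuous pieces directly produce $\frac{\partial\phi}{\partial t}+\sum_{k}\frac{\partial\phi}{\partial y_{k}}b_{k}$ in the drift together with the martingale $\sum_{k}\frac{\partial\phi}{\partial y_{k}}\sum_{n}\sigma_{kn}\,dW$. For the Poisson jumps I would write the jump sum as $\int_{0}^{T}\int_{\mathbb{R}_{0}}[\phi(s,Y(s-)+\eta^{(m)},\alpha(s-))-\phi(s,Y(s-),\alpha(s-))]\,N_{m}(ds,dz)$ and compensate it into a $\tilde{N}$-martingale plus a $\nu(dz)\,ds$-integral. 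Combining this compensator with the drift term $-\int_{0}^{T}\int_{\mathbb{R}_{0}}\sum_{n}\frac{\partial\phi}{\partial y_{n}}\eta_{nm}\,\nu(dz)\,dt$ coming from the $-\nu(dz)dt$ part of $\tilde{N}$ yields exactly the stated $\nu_{m}(dz)$-drift integrand with its first-order Taylor subtraction. Applying the identical compensation to the chain jumps via $d\Phi_{j}=d\tilde{\Phi}_{j}+d\lambda_{j}$ converts $\sum_{j}[\phi(s,Y(s-)+\gamma^{(j)},e_{j})-\phi(s,Y(s-),\alpha(s-))]$ into the stated $\tilde{\Phi}_{j}$-martingale plus a compensator drift $d\lambda_{j}$ (written $\lambda_{j}(t)\,dt$ in the notation of Section~\ref{pre}) carrying the correction $-\sum_{n}\frac{\partial\phi}{\partial y_{n}}\gamma_{nj}$.

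The main obstacle, which I would treat carefully, is the simultaneous change of the state $Y$ and the regime $\alpha$ at a chain transition: there $\phi$ jumps from $\phi(s,Y(s-),\alpha(s-))$ to $\phi(s,Y(s-)+\gamma^{(j)},e_{j})$, so the increment must carry the post-jump label $e_{j}$ in the first evaluation while the first-order correction is taken at the pre-jump state $\alpha(s-)=e_{i}$. Making this precise requires expressing the chain increments through the basic martingales $m_{ij}$ of the preliminaries and checking that $\langle\alpha(s-),e_{i}\rangle$ correctly selects the pre-jump state in each compensator. A final routine point is localization: since $\phi\in C^{1,2}$ need not have bounded derivatives, I would first establish the identity for the process stopped at $\tau_{n}=\inf\{t:|Y(t)|\geq n\}$, where all coefficients and derivatives are bounded on the relevant compact, and then let $n\to\infty$, using the integrability guaranteed by (K1)--(K2) and dominated convergence; the martingale terms have zero expectation and the remaining integrals converge by the local boundedness of $\phi$ and its derivatives.
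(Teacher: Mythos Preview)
The paper does not prove this statement at all: Theorem~\ref{thm:ito} is placed in Appendix~\ref{ApB} as a quoted result, introduced by ``Let us give the extension of It\^o's formula as in Zhang, Elliott and Siu~\cite{zes:02},'' and no argument is supplied. So there is no paper proof to compare against; the theorem functions here purely as a black-box tool invoked inside the proofs of Theorems~\ref{th1} and~\ref{th2}.

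Your sketch is nonetheless a correct and standard route to such a formula. Treating $(Y,\alpha)$ as a semimartingale and invoking the general It\^o formula for c\`adl\`ag semimartingales, then separating the jump sum into the Poisson and Markov-chain contributions (which is licit because the independence of $N$ and $\alpha$ rules out simultaneous jumps a.s.), reproduces exactly the displayed terms after compensation. You correctly isolate the only genuinely delicate point: at a chain transition both $Y$ and $\alpha$ jump, so the post-jump evaluation must carry the new label $e_{j}$ while the first-order subtraction is taken at $\alpha(s-)$; your suggestion to route this through the basic martingales $m_{ij}$ and the indicator $\langle\alpha(s-),e_{i}\rangle$ from Section~\ref{pre} is the right mechanism. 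The localization via $\tau_{n}=\inf\{t:|Y(t)|\ge n\}$ is also the standard way to avoid integrability issues for unbounded $\phi$. In short, what you have written is essentially how the cited reference \cite{zes:02} proceeds, whereas the present paper simply imports the result.
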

Furthermore, we present product rule for Markov regime-switching jump-diffusion models as in Zhang, Elliott and Siu~\cite{zes:02}.
\begin{lemma}
Suppose that $Y ^{j}(t), j = 1, 2$, are processes defined by the forward SDEs,
\begin{align}
\label{eq:product}
Y^{j}(t)=&\ b^{j}(t,Y(t),\alpha(t))dt+\sigma^{j}(t,Y(t),\alpha(t))dW(t) \nonumber\\
&+\int_{\mathbb{R}_{0}}\eta^{j}(t,Y(t-),\alpha(t-),z)\tilde{N}(dt,dz)  \nonumber\\
&+\gamma^{j}(t,Y(t-),\alpha(t-))d\tilde{\Phi}(t), \qquad t\in [0,T],  \\
Y^{j}(0)=&\ y^{j}\in \mathbb{R}^{N}, \quad j=1,2, \nonumber
\end{align}
where $b^{j}(t)\in \mathbb{R}^{N}$, $\sigma^{j}(t)\in \mathbb{R}^{N\times M}$, $\eta^{j}(t):=[\eta_{nl}^{j}(t)]\in \mathbb{R}^{N\times L}$ and $\gamma^{j}(t):=[\gamma_{nl}^{j}(t)]$\\$\in \mathbb{R}^{N\times D}$, $t\in [0,T]$, are predictable processes such that the integrals in \ref{eq:product} exist. Then,
\begin{align*}
&\left\langle Y^{1}(T),Y^{2}(T)\right\rangle  \\
&=\left\langle y^{1},y^{2}\right\rangle+\int_{0}^{T}\left\langle Y^{1}(t-),dY^{2}(t)\right\rangle+\int_{0}^{T}\left\langle Y^{2}(t-),dY^{1}(t)\right\rangle \\
&\quad+\int_{0}^{T}\left[(\sigma^{1}(t,\alpha(t)))^{T}\sigma^{2}(t,\alpha(t))\right]dt \\
&\quad+\int_{0}^{T}\sum_{l=1}^{L}\sum_{n=1}^{N}\eta_{nl}^{1}(t,\alpha(t-),z)\eta_{nl}^{2}(t,\alpha(t-),z)\nu^{l}(dz)dt
\end{align*}
\begin{align*}
&\quad+\int_{0}^{T}\sum_{l=1}^{D}\sum_{n=1}^{N}\gamma_{nl}^{1}(t,\alpha(t-))\gamma_{nl}^{2}(t,\alpha(t-))\lambda_{l}(t)dt.
\end{align*}
\end{lemma}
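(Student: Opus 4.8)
The plan is to derive the identity as a single application of the regime-switching It\^o formula (Theorem \ref{thm:ito}) to a stacked process. First I would assemble the two equations into one $2N$-dimensional semimartingale $Y=(Y^{1},Y^{2})$ whose drift, diffusion, jump and regime coefficients are the vertical concatenations of those of $Y^{1}$ and $Y^{2}$. Since the hypotheses only require the coefficients to be predictable processes for which all the stochastic integrals exist, no Markovian structure of the coefficients is needed: they enter Theorem \ref{thm:ito} simply as admissible integrands. I would then apply Theorem \ref{thm:ito} to the test function $\phi(y^{1},y^{2})=\langle y^{1},y^{2}\rangle=\sum_{k=1}^{N}y^{1}_{k}y^{2}_{k}$, which is a polynomial, hence $\phi(\cdot,\cdot,e_{i})\in C^{1,2}$ for every $e_{i}\in S$, so the formula applies.

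The derivatives of $\phi$ are the engine of the computation and are immediate: $\partial\phi/\partial t=0$; the gradient is $\partial\phi/\partial y^{1}_{k}=y^{2}_{k}$ and $\partial\phi/\partial y^{2}_{k}=y^{1}_{k}$; and the only nonvanishing second derivatives are the mixed pair $\partial^{2}\phi/\partial y^{1}_{k}\partial y^{2}_{n}=\partial^{2}\phi/\partial y^{2}_{n}\partial y^{1}_{k}=\delta_{kn}$. The constant $\langle y^{1},y^{2}\rangle$ in the claim arises from the lower endpoint $\phi(0,Y(0),\alpha(0))$. Substituting the gradient into the drift term and into the Brownian, Poisson and regime martingale integrals of Theorem \ref{thm:ito}, and regrouping the contributions belonging to $Y^{1}$ and to $Y^{2}$, reconstitutes exactly $\int_{0}^{T}\langle Y^{2}(t-),dY^{1}(t)\rangle+\int_{0}^{T}\langle Y^{1}(t-),dY^{2}(t)\rangle$, the two first-order integrals of the claim.

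The three covariation integrals come from the three second-order contributions of Theorem \ref{thm:ito}, and here bilinearity of $\phi$ is decisive: because $\phi$ is quadratic, its increment closes with no higher-order remainder, so for any jump $h=(h^{1},h^{2})$ one has the exact identity $\phi(y^{1}+h^{1},y^{2}+h^{2})-\phi(y^{1},y^{2})-\langle y^{2},h^{1}\rangle-\langle y^{1},h^{2}\rangle=\langle h^{1},h^{2}\rangle$. Applied with $h=(\eta^{1,(m)},\eta^{2,(m)})$ and with $h=(\gamma^{1,(j)},\gamma^{2,(j)})$, the compensator integrals of Theorem \ref{thm:ito}---which subtract precisely the gradient term---leave $\langle\eta^{1,(m)},\eta^{2,(m)}\rangle=\sum_{n}\eta^{1}_{nm}\eta^{2}_{nm}$ under $\nu_{m}(dz)\,dt$ and $\langle\gamma^{1,(j)},\gamma^{2,(j)}\rangle=\sum_{n}\gamma^{1}_{nj}\gamma^{2}_{nj}$ under $\lambda_{j}(t)\,dt$; summing over $m$ and over $j$ yields the two jump covariation integrals of the claim. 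The Brownian Hessian term is handled by the same derivatives: since only the symmetric mixed entries of the Hessian survive, the factor $\frac{1}{2}$ in Theorem \ref{thm:ito} is cancelled by the double counting of the two symmetric index pairs, producing $\sum_{k=1}^{N}\sum_{l=1}^{M}\sigma^{1}_{kl}\sigma^{2}_{kl}$, i.e.\ the trace $[(\sigma^{1})^{T}\sigma^{2}]$ of the claim.

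The step I expect to require the most care is the bookkeeping of the jump and regime contributions: one must track each occurrence of the exact cross term $\langle h^{1},h^{2}\rangle$ across both the compensator and the martingale integrals of Theorem \ref{thm:ito}, and verify that, once the gradient parts have been absorbed into $\int_{0}^{T}\langle Y^{2}(t-),dY^{1}(t)\rangle+\int_{0}^{T}\langle Y^{1}(t-),dY^{2}(t)\rangle$, precisely the predictable covariation integrals remain. A secondary technical point is that $\phi$ is unbounded, so strictly one applies Theorem \ref{thm:ito} along a localizing sequence of stopping times that reduces the martingale parts and then passes to the limit using the standing integrability hypotheses; this is exactly where the assumption that all the integrals exist is used. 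Collecting the constant term, the two reassembled first-order integrals and the three covariation integrals gives the asserted product rule.
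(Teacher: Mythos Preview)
The paper does not supply its own proof of this lemma: it is quoted in Appendix~\ref{ApB} as a result of Zhang, Elliott and Siu~\cite{zes:02}, with no accompanying argument, so there is nothing in the paper to compare your proposal against. Your plan---stacking $(Y^{1},Y^{2})$ into a single $2N$-dimensional semimartingale and applying the regime-switching It\^o formula of Theorem~\ref{thm:ito} to the bilinear test function $\phi(y^{1},y^{2})=\langle y^{1},y^{2}\rangle$---is precisely the standard derivation of such product rules and is correct in outline.

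One point in your bookkeeping deserves a second look. You assert that after absorbing the gradient pieces into the two first-order integrals, ``precisely the predictable covariation integrals remain.'' But the martingale integrands in Theorem~\ref{thm:ito} are the \emph{full} increments $\phi(Y+h)-\phi(Y)$, which for bilinear $\phi$ equal $\langle Y^{2},h^{1}\rangle+\langle Y^{1},h^{2}\rangle+\langle h^{1},h^{2}\rangle$. Only the first two summands are absorbed into $\int\langle Y^{2}(t-),dY^{1}(t)\rangle+\int\langle Y^{1}(t-),dY^{2}(t)\rangle$; the cross term $\langle h^{1},h^{2}\rangle$ survives against $\tilde N$ and $d\tilde\Phi$ \emph{in addition to} its appearance against $\nu(dz)\,dt$ and $\lambda_{j}(t)\,dt$ from the compensator block. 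Summing, one obtains $\langle\eta^{1,(m)},\eta^{2,(m)}\rangle$ against the uncompensated $N(ds,dz)$ and $\langle\gamma^{1,(j)},\gamma^{2,(j)}\rangle$ against $d\Phi_{j}$, i.e.\ the genuine quadratic covariation $d[Y^{1},Y^{2}]$, not merely its predictable compensator. The lemma as printed displays only the predictable part; either a compensated-martingale remainder is being tacitly suppressed or the intended integrators are $N$ and $\Phi$ rather than $\nu\,dt$ and $\lambda\,dt$. This is a question about the statement (and the source~\cite{zes:02}) rather than a defect in your method, but your sentence about what ``remains'' should be adjusted accordingly.
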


Let us state very well-known Banach Fixed Point Theorem as follows: 
\begin{theorem}
\label{thm:bfp}
Let $(X,d)$ be a complete metric space and $T:X\rightarrow X$ be a map such that 
\begin{equation*}
d(Tx,Tx')\leq cd(x,x')
\end{equation*}
for some $0\leq c<1$ and all $x, \ x'\in X$. Then $T$ has a unique fixed point $x^{*}$ in $X$, i.e., $T(x^{*})=x^{*}$.
\end{theorem}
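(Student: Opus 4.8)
The plan is to use the method of successive approximations (Picard iteration). I would fix an arbitrary point $x_{0}\in X$ and define the orbit $(x_{n})_{n\geq 0}$ by $x_{n+1}=Tx_{n}$. The strategy is then fourfold: show that this sequence is Cauchy, invoke completeness to produce a limit, verify that the limit is a fixed point, and finally deduce uniqueness directly from the contraction inequality.

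First I would establish, by induction on $n$, the one-step estimate
\[
d(x_{n+1},x_{n})\leq c^{n}\,d(x_{1},x_{0}).
\]
The base case is immediate, and the inductive step follows by applying the hypothesis to $d(x_{n+1},x_{n})=d(Tx_{n},Tx_{n-1})\leq c\,d(x_{n},x_{n-1})$. Next, for any $m>n$, I would chain these bounds through the triangle inequality and sum the resulting geometric series:
\[
d(x_{m},x_{n})\leq \sum_{k=n}^{m-1}d(x_{k+1},x_{k})\leq d(x_{1},x_{0})\sum_{k=n}^{m-1}c^{k}\leq \frac{c^{n}}{1-c}\,d(x_{1},x_{0}).
\]
Since $0\leq c<1$, the tail $c^{n}/(1-c)$ tends to $0$ as $n\to\infty$, so $(x_{n})$ is Cauchy. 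This geometric summation, which crucially uses $c<1$, is the technical heart of the argument; once it is in place the remainder is essentially formal.

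By completeness of $(X,d)$, the Cauchy sequence converges to some $x^{*}\in X$. To see that $x^{*}$ is fixed, I would use that a contraction is Lipschitz, hence continuous, so that $Tx^{*}=T(\lim_{n}x_{n})=\lim_{n}Tx_{n}=\lim_{n}x_{n+1}=x^{*}$. Finally, for uniqueness, if $x^{*}$ and $y^{*}$ were both fixed points, then
\[
d(x^{*},y^{*})=d(Tx^{*},Ty^{*})\leq c\,d(x^{*},y^{*}),
\]
which forces $(1-c)\,d(x^{*},y^{*})\leq 0$; as $c<1$, this yields $d(x^{*},y^{*})=0$, i.e. $x^{*}=y^{*}$.

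The only genuine subtlety, as noted, is the Cauchy estimate: everything downstream (existence of the limit via completeness, fixedness via continuity, and uniqueness) is routine, whereas the geometric-tail bound is where the strict contraction constant $c<1$ does the essential work. I would therefore organize the writeup so that the inductive one-step bound and the summation step receive the bulk of the attention.
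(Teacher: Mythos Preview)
Your proof is correct and is the standard textbook argument for the Banach Fixed Point Theorem. Note, however, that the paper does not actually supply a proof of this statement: it merely records the theorem in the appendix as a ``very well-known'' result to be invoked in the proof of Theorem~\ref{th1}. There is therefore nothing in the paper to compare your argument against; your Picard-iteration approach is exactly the classical one and would be perfectly appropriate here.
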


\section{Complementary Remarks for Duality Theorem}
\label{ApC}

Let us introduce a generalized form of BSDEs with jumps and regimes as in \cite{sw:64}:
\begin{align*}
-dY(t)=&\ f(t,Y(t),Z(t),Q(t),V(t),Y(t+\delta_{1}(t)),Z(t+\delta_{2}(t)), \nonumber \\
 &Q(t+\delta_{3}(t)),V(t+\delta_{4}(t)),\alpha(t))ds-Z(t)dW(t) \nonumber\\
 &-\int_{\mathbb{R}_{0}}Q(t,z)\tilde{N}(dt,dz) -V(t)d\tilde{\Phi}(t), \qquad t\in [0,T],\\
Y(t)=&\ \xi(t), \ Z(t)=\psi(t),\ Q(t)=\zeta(t), \ V(t)=\vartheta(t), \ t\in [T,T+K]. \nonumber
\end{align*}
Let $\delta_{i}(\cdot)$, $i=1,2,3,4$, be an $\mathbb{R}^{+}$-valued continuous functions on $[0,T]$. \\

Please see Savku and Weber \cite{sw:64} for an  existence-uniqueness theorem.

\end{document}